\newtheorem{theorem}{Theorem}[section]%[section]
\newtheorem{lemma}[theorem]{Lemma}
\theoremstyle{remark}
\numberwithin{equation}{section}
\newcommand{\binq}[2]{\genfrac{[}{]}{0mm}{0}{#1}{#2}}
\newcommand{\Rmnum}[1]
\author{Yifan Chen}
\address{DEPARTMENT OF MATHEMATICS, SHANGHAI UNIVERSITY, SHANGHAI 200444, P. R. CHINA}
\email{$^*$ Corresponding author. xiaoxiawang@shu.edu.cn (X. Wang), xchangi@shu.edu.cn (C. Xu),chenyf576@shu.edu.cn (Y. Chen).}
\author{Chang Xu}
\author{Xiaoxia Wang$^*$}
\title{Some new results about $q$-trinomial coefficients}
\subjclass[2010]{Primary 33D15; Secondary 11A07, 11B65}
\keywords{$q$-trinomial coefficients; $q$-congruences; cyclotomic polynomials}
\begin{document}
\begin{abstract}
 In this paper, we present several new congruences on the $q$-trinomial coefficients
introduced by Andrews and Baxter. A new congruence on sums of
central $q$-binomial coefficients is also established.
\end{abstract}
\maketitle

\section{Introduction}
In 2019, Straub \cite{Straub2} gave the following $q$-supercongruence:
\begin{equation}\label{Eq3}
\binq{an}{bn}\equiv\binq{a}{b}_{q^{n^2}}-(a-b)b\dbinom{a}{b}\frac{n^2-1}{24}(q^n-1)^2\pmod{\Phi_n(q)^3},
\end{equation}
which is a $q$-analogue of the Wolstenholme--Ljunggren congruence: for any prime $p\geq5$,
\begin{equation*}
\dbinom{ap}{bp}\equiv\dbinom{a}{b}\pmod{p^3}.
\end{equation*}
Here and throughout the paper, $\Phi_n(q)$ stands for the $n$-th {\em cyclotomic polynomial} in $q$:
\begin{align*}
\Phi_n(q)=\prod_{\substack{1\leqslant k\leqslant n\\ \gcd(n,k)=1}}(q-\zeta^k),
\end{align*}
where $\zeta$ is an $n$-th primitive root of unity. The $q$-binomial coefficient is defined as
\begin{align*}
\binq{n}{k}=
\binq{n}{k}_q=
\begin{cases}
\displaystyle\frac{(q;q)_{n}}{(q;q)_{k}(q;q)_{n-k}}, &\text{if $0 \leq k \leq n$; }\\[4mm]
0, &\text{otherwise},
\end{cases}
\end{align*}
where the {\em $q$-shifted factorial} is defined as
$(a;q)_0=1$ and $(a;q)_n=(1-a)(1-aq)\cdots(1-aq^{n-1})$ with $n\in \mathbb{Z}^{+}$.

For $n\in \mathbb{N}$ and integer $j$ with $-n \leq j\leq n$, the trinomial coefficient is the coefficient of $x^j$ in the expansion of $(1+x+x^{-1})^n$. Namely,
\begin{equation*}
\bigg(\!\!\dbinom{n}{j}\!\!\bigg)= [x^j](1+x+x^{-1})^n,
\end{equation*}
and it has a simple expression (see\cite{Sills})
\begin{equation*}
\bigg(\!\!\dbinom{n}{j}\!\!\bigg)=\sum_{k=0}^{n}\dbinom{n}{k}\dbinom{n-k}{k+j}.
\end{equation*}
Six different $q$-analogues of the trinomial coefficients, which play significant roles in hard hexagon model, were introduced by Andrews and Baxter \cite{AndrewsBaxter}. One of these $q$-analogues is
\begin{equation}\label{Eq2}
\bigg(\!\!\dbinom{n}{j}\!\!\bigg)_q=\sum_{k=0}^{n}q^{k(k+j)}\binq{n}{k}\binq{n-k}{k+j}.
\end{equation}

During the past few years, some experts have paid attention to $q$-analogues of supercongruences.
We refer the reader to \cite{LiuWang1, LiuWang2, GuoZudilin, Guo4, Guo6} for some of their work.
Moreover, some congruences for $q$-binomial coefficients and $q$-trinomial coefficients can be found in \cite{Wang, NiPan, Zudilin, Liu4,Liu, Apagodu, GuoZeng}.

Recently, Liu \cite{Liu3} established the following beautiful $q$-supercongruences:
for any positive integer $n$, modulo $\Phi_n(q)^2$,
\begin{align}
\label{Eq25}\bigg(\!\!\dbinom{n}{0}\!\!\bigg)_q&\equiv \begin{cases}
 (-1)^m(1+q^m)q^{m(3m-1)/2},&\text{if \ $n=3m$};\\[4mm]
 (-1)^mq^{m(3m+1)/2},&\text{if \ $n=3m+1$};\\[4mm]
 (-1)^mq^{m(3m-1)/2},&\text{if \ $n=3m-1$},
\end{cases}\\[4mm]
\label{Eq4}\bigg(\!\!\dbinom{2n}{n}\!\!\bigg)_q&\equiv \begin{cases}
 2(-1)^m(1+q^m)q^{m(3m-1)/2}-3m(1-q^{3m}),&\text{if \ $n=3m$};\\[4mm]
 2(-1)^mq^{m(3m+1)/2}-(3m+1)(1-q^{3m+1}),&\text{if \ $n=3m+1$};\\[4mm]
 2(-1)^mq^{m(3m-1)/2}-(3m-1)(1-q^{3m-1}),&\text{if \ $n=3m-1$}.
\end{cases}
\end{align}

Motivated by the work just mentioned, we shall establish the following $q$-supercongruence similar to \eqref{Eq25} and \eqref{Eq4}.
\begin{theorem}\label{Thm8}
For any positive integer $n$, modulo ${\Phi_n(q)^2}$,
\begin{align}\label{Eq26}
\bigg(\!\!\dbinom{2n}{0}\!\!\bigg)_q
\equiv \begin{cases}
 2(-1)^m(1+q^m)q^{m(3m-1)/2}-9m(1-q^{3m})+1,&\text{if \ $n=3m$};\\[4mm]
 2(-1)^mq^{m(3m+1)/2}-3(3m+1)(1-q^{3m+1})+1,&\text{if \ $n=3m+1$};\\[4mm]
 2(-1)^mq^{m(3m-1)/2}-3(3m-1)(1-q^{3m-1})+1,&\text{if \ $n=3m-1$}.
\end{cases}
\end{align}
\end{theorem}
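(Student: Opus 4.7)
The plan is to first observe that, via Liu's congruence \eqref{Eq25}, the three cases of \eqref{Eq26} unify into the single uniform congruence
\begin{equation*}
\bigg(\!\!\dbinom{2n}{0}\!\!\bigg)_q \equiv 2\bigg(\!\!\dbinom{n}{0}\!\!\bigg)_q + 1 - 3n(1-q^n) \pmod{\Phi_n(q)^2}. \qquad (\star)
\end{equation*}
A case-by-case check in each residue class $n\equiv 0,\pm 1\pmod{3}$ confirms that combining $(\star)$ with \eqref{Eq25} reproduces \eqref{Eq26}, so it suffices to establish $(\star)$.

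To prove $(\star)$, I split the defining sum $\bigg(\!\!\dbinom{2n}{0}\!\!\bigg)_q = \sum_{k=0}^{n}q^{k^2}\binq{2n}{k}\binq{2n-k}{k}$ into three parts: $k=0$ (contributing $1$), the bulk range $1\le k\le n-1$, and $k=n$ (contributing $q^{n^2}\binq{2n}{n}$). The technical heart is the lemma
\begin{equation*}
\binq{2n}{k}\binq{2n-k}{k} \equiv 2\binq{n}{k}\binq{n-k}{k} \pmod{\Phi_n(q)^2}, \qquad 1\le k\le n-1. \qquad (\dagger)
\end{equation*}
For $k\le\lfloor n/2\rfloor$, direct factorial simplification yields the product formula
\begin{equation*}
\frac{\binq{2n}{k}\binq{2n-k}{k}}{\binq{n}{k}\binq{n-k}{k}}=\prod_{j=1}^{2k}\frac{1-q^{2n-2k+j}}{1-q^{n-2k+j}}.
\end{equation*}
All factors with $j\ne 2k$ take the form $(1-q^{n+a})/(1-q^a)$, which reduce to $1$ modulo $\Phi_n(q)$ since $q^n\equiv 1$; the $j=2k$ factor telescopes after cancellation of $1-q^n$ to $(1-q^{2n})/(1-q^n)=1+q^n\equiv 2\pmod{\Phi_n(q)}$. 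Because $\binq{n}{k}\binq{n-k}{k}$ vanishes at every primitive $n$-th root of unity for $0<k<n$ (by the $q$-Lucas theorem) and is therefore divisible by $\Phi_n(q)$, the congruence $(\dagger)$ follows modulo $\Phi_n(q)^2$ in this sub-range. For $\lfloor n/2\rfloor<k\le n-1$ the right side of $(\dagger)$ is zero (as $\binq{n-k}{k}=0$), while $q$-Lucas shows that both $\binq{2n}{k}$ and $\binq{2n-k}{k}$ vanish at primitive $n$-th roots of unity, so the left side is divisible by $\Phi_n(q)^2$.

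For the $k=n$ term, Straub's congruence \eqref{Eq3} with $(a,b)=(2,1)$ gives $\binq{2n}{n}\equiv 1+q^{n^2}\pmod{\Phi_n(q)^2}$ after the $(q^n-1)^2$ correction is absorbed. Combining this with the elementary expansion $q^{n^2}\equiv 1+n(q^n-1)\pmod{\Phi_n(q)^2}$, which follows from $q^{n^2}-1=(q^n-1)\sum_{j=0}^{n-1}q^{jn}$ together with $\sum_{j=0}^{n-1}q^{jn}\equiv n\pmod{\Phi_n(q)}$, a short calculation yields $q^{n^2}\binq{2n}{n}\equiv 2-3n(1-q^n)\pmod{\Phi_n(q)^2}$. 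Summing the three contributions -- namely $1$ from $k=0$, the bulk $2\bigg(\!\!\dbinom{n}{0}\!\!\bigg)_q-2$ from $(\dagger)$ (the terms $k\ge\lfloor n/2\rfloor+1$ in $\bigg(\!\!\dbinom{n}{0}\!\!\bigg)_q$ vanish automatically), and $2-3n(1-q^n)$ from $k=n$ -- produces $(\star)$, and substituting \eqref{Eq25} in each residue class then yields \eqref{Eq26}.

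The main obstacle is the verification of $(\dagger)$: one must correctly isolate the single nontrivial factor $1+q^n$ that produces the ``$2$'' and confirm that the remaining $2k-1$ factors all reduce to $1$ modulo $\Phi_n(q)$. Once $(\dagger)$ is established, the assembly of the $k=0$, bulk, and $k=n$ contributions, together with the $q^{n^2}$-expansion and Straub's congruence, is routine modulo-$\Phi_n(q)^2$ bookkeeping.
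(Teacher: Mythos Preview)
Your argument is correct, and it takes a genuinely different route from the paper's. The paper does not prove Theorem~\ref{Thm8} directly; it obtains it as the special case $a=2$ of Theorem~\ref{Thm3}, whose proof passes through Liu's auxiliary Lemma~\ref{Prop1} on the sum $\sum_{k}q^{-k(k-1)}\binq{2k}{k}/[2k]_q$ together with the congruences \eqref{Eq14}, \eqref{Eq30} and \eqref{Eq20}. Your approach instead establishes the termwise comparison $(\dagger)$, which yields the clean structural relation $(\star)$ reducing $\big(\!\!\binom{2n}{0}\!\!\big)_q$ directly to Liu's already-proved congruence \eqref{Eq25} for $\big(\!\!\binom{n}{0}\!\!\big)_q$; the only external input you need beyond \eqref{Eq25} is the modulo-$\Phi_n(q)^2$ value of $\binq{2n}{n}$, obtained from Straub's \eqref{Eq3}. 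The paper's route has the advantage of generality (it gives Theorem~\ref{Thm3} for every $a\ge 2$), while your route is more self-contained for the case at hand, bypassing Lemma~\ref{Prop1} entirely and exposing the identity $(\star)$, which is of independent interest. Your verification of $(\dagger)$ --- the factorisation of the ratio as $(1+q^n)$ times $2k-1$ factors each $\equiv 1\pmod{\Phi_n(q)}$, combined with $\Phi_n(q)\mid\binq{n}{k}$ for $1\le k\le\lfloor n/2\rfloor$, and the separate $q$-Lucas argument for $\lfloor n/2\rfloor<k\le n-1$ --- is sound, as is your computation $q^{n^2}\binq{2n}{n}\equiv 2-3n(1-q^n)\pmod{\Phi_n(q)^2}$.
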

More generally, we shall give the following two new $q$-supercongruences.
\begin{theorem}\label{Thm2}
For any positive integers $a$ and $n$,  modulo ${\Phi_n(q)^2}$,
 \begin{align}\label{Eq7}
\bigg(\!\!\dbinom{an}{an-n}\!\!\bigg)_q\equiv \begin{cases}
 (-1)^m a(1+q^m)q^{m(3m-1)/2}-3m(1-q^{3m}){\binom{a}{2}},&\text{if \ $n=3m$};\\[5mm]
 (-1)^maq^{m(3m+1)/2}-(3m+1)(1-q^{3m+1})\binom{a}{2},&\text{if \ $n=3m+1$};\\[5mm]
 (-1)^maq^{m(3m-1)/2}-(3m-1)(1-q^{3m-1})\binom{a}{2},&\text{if \ $n=3m-1$}.
\end{cases}
\end{align}
\end{theorem}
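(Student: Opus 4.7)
The plan is to first prove the cleaner intermediate congruence
\[
\bigg(\!\!\dbinom{an}{(a-1)n}\!\!\bigg)_q \equiv a \bigg(\!\!\dbinom{n}{0}\!\!\bigg)_q - n(1-q^n)\binom{a}{2} \pmod{\Phi_n(q)^2},
\]
and then substitute Liu's three-branch formula \eqref{Eq25} for $\bigg(\!\!\binom{n}{0}\!\!\bigg)_q$ to recover the three cases of \eqref{Eq7}. Specializing $a=1$ recovers \eqref{Eq25} and $a=2$ recovers \eqref{Eq4}, which serve as consistency checks.

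For the intermediate congruence, I would use the multinomial identity $\binq{an}{k}\binq{an-k}{n-2k}=\binq{an}{n-k}\binq{n-k}{k}$ together with $\binq{an-k}{k+an-n}=\binq{an-k}{n-2k}$ to rewrite the definition \eqref{Eq2} as
\[
\bigg(\!\!\dbinom{an}{(a-1)n}\!\!\bigg)_q=\sum_{k=0}^{\lfloor n/2\rfloor}q^{k(k+(a-1)n)}\binq{an}{n-k}\binq{n-k}{k}.
\]
Since $q^n\equiv1\pmod{\Phi_n(q)}$ we have $q^{k(k+(a-1)n)}\equiv q^{k^2}\pmod{\Phi_n(q)}$, and for $0<k<n$ the factor $\binq{an}{n-k}$ is already divisible by $\Phi_n(q)$, so when multiplied by this binomial the $(a-1)n$ piece of the exponent drops out modulo $\Phi_n(q)^2$.

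The key lemma is that for $0<k<n$,
\[
\binq{an}{n-k}\equiv a\binq{n}{n-k}\pmod{\Phi_n(q)^2}.
\]
I would prove this by forming
\[
R_k(q)=\frac{\binq{an}{n-k}}{\binq{n}{n-k}}=\frac{(q;q)_{an}\,(q;q)_k}{(q;q)_n\,(q;q)_{(a-1)n+k}},
\]
decomposing each $(q;q)_N=A_N(q)\prod_{i=1}^{\lfloor N/n\rfloor}(1-q^{in})$ with $A_N(q)=\prod_{1\le j\le N,\,n\nmid j}(1-q^j)$, and cancelling to obtain $R_k(q)=\frac{A_{an}A_k}{A_nA_{(a-1)n+k}}\cdot\big(1+q^n+q^{2n}+\cdots+q^{(a-1)n}\big)$. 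At a primitive $n$-th root of unity $\zeta$ the last factor evaluates to $a$; the numerator $A_{an}A_k$ evaluates to $n^a\cdot\prod_{r=1}^k(1-\zeta^r)$, while the denominator $A_nA_{(a-1)n+k}$ evaluates to $n\cdot n^{a-1}\prod_{r=1}^k(1-\zeta^r)$, so the $A$-ratio is $1$. Hence $R_k(\zeta)=a$, i.e., $R_k(q)\equiv a\pmod{\Phi_n(q)}$, and multiplying by $\binq{n}{n-k}$ (itself divisible by $\Phi_n(q)$ for $0<k<n$) yields the lemma. Summing over $k\ge 1$ then produces
\[
\sum_{k=1}^{\lfloor n/2\rfloor}q^{k(k+(a-1)n)}\binq{an}{n-k}\binq{n-k}{k}\equiv a\bigg(\bigg(\!\!\dbinom{n}{0}\!\!\bigg)_q-1\bigg)\pmod{\Phi_n(q)^2}.
\]

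For the $k=0$ contribution $\binq{an}{n}_q$ I would invoke Straub's congruence \eqref{Eq3} with $b=1$: since $(q^n-1)^2$ lies in $\Phi_n(q)^2$, it collapses to $\binq{an}{n}_q\equiv 1+q^{n^2}+q^{2n^2}+\cdots+q^{(a-1)n^2}\pmod{\Phi_n(q)^2}$. A short computation using $q^{n^2}-1=(q^n-1)(1+q^n+\cdots+q^{(n-1)n})\equiv n(q^n-1)\pmod{\Phi_n(q)^2}$ gives $q^{in^2}-1\equiv in(q^n-1)\pmod{\Phi_n(q)^2}$, so summation yields $\binq{an}{n}_q\equiv a-n(1-q^n)\binom{a}{2}\pmod{\Phi_n(q)^2}$. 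Combining with the $k\ge 1$ sum establishes the intermediate congruence, and substituting \eqref{Eq25} completes the proof. The main obstacle is the key lemma, which hinges on precisely tracking how the factors $(1-q^{jn})$ that vanish at $\zeta$ redistribute across the four $q$-Pochhammer symbols so that the $A$-factor ratio telescopes to $1$ and the cyclotomic-ratio factor evaluates to $a$; the rest is bookkeeping around the already-known \eqref{Eq3} and \eqref{Eq25}.
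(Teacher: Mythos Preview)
Your argument is correct and reaches the same clean intermediate congruence
\[
\bigg(\!\!\dbinom{an}{(a-1)n}\!\!\bigg)_q \equiv a\,R_n(q) - n(1-q^n)\binom{a}{2}\pmod{\Phi_n(q)^2}
\]
that the paper obtains, but by a different mechanism. The paper keeps the factorization $\binq{an}{k}\binq{an-k}{k+an-n}$, reduces each factor separately (Eqs.~\eqref{Eq14}--\eqref{Eq15}) to land on the sum $\sum_{k\ge 1}\frac{q^{-k(k-1)}}{1-q^k}\binq{2k-1}{k}$, and then invokes Liu's Lemma~\ref{Prop1} to evaluate that sum. You instead apply the multinomial identity to rewrite the summand as $\binq{an}{n-k}\binq{n-k}{k}$ and prove the single ratio lemma $\binq{an}{n-k}\equiv a\binq{n}{n-k}\pmod{\Phi_n(q)^2}$ by evaluating at a primitive $n$-th root of unity, which collapses the general-$a$ sum directly onto the $a=1$ sum $\big(\!\binom{n}{0}\!\big)_q$. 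Your route is more self-contained: it bypasses Lemma~\ref{Prop1} entirely and needs only \eqref{Eq25} and the weak form of \eqref{Eq3}, whereas the paper's route has the advantage that the same central-binomial sum machinery is reused verbatim in the proof of Theorem~\ref{Thm3}. Both treatments of the $k=0$ term via Straub's congruence are identical.
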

\begin{theorem}\label{Thm3}
For any positive integers $a$ and $n$ with $a\geq 2$,  modulo ${\Phi_n(q)^2}$,
\begin{align}\label{Eq8}
&\bigg(\!\!\dbinom{an}{an-2n}\!\!\bigg)_q\nonumber\\[2mm]
&\equiv \begin{cases}
(-1)^m 2\binom{a}{2}(1+q^m)q^{m(3m-1)/2}-9m(1-q^{3m})\binom{a+1}{3}+\frac{3a-a^2}{2},&\text{if \ $n=3m$};\\[4mm]
(-1)^m 2\binom{a}{2}q^{m(3m+1)/2}-3(3m+1)(1-q^{3m+1})\binom{a+1}{3}+\frac{3a-a^2}{2},&\text{if \ $n=3m+1$};\\[4mm]
(-1)^m 2\binom{a}{2}q^{m(3m-1)/2}-3(3m-1)(1-q^{3m-1})\binom{a+1}{3}+\frac{3a-a^2}{2},&\text{if \ $n=3m-1$}.
\end{cases}
\end{align}
\end{theorem}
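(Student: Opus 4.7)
The plan is to expand the definition \eqref{Eq2} with $j=(a-2)n$ and analyse the resulting sum modulo $\Phi_n(q)^2$:
\[
\bigg(\!\!\dbinom{an}{an-2n}\!\!\bigg)_q=\sum_{k=0}^{n}q^{k(k+(a-2)n)}\binq{an}{k}\binq{an-k}{(a-2)n+k},
\]
where the sum truncates at $k=n$ because the second factor vanishes otherwise. Using the fact that $\Phi_n(q)$ divides $1-q^j$ exactly when $n\mid j$, a direct count of $\Phi_n(q)$-adic valuations shows that $\binq{an}{k}$ carries a factor of $\Phi_n(q)$ precisely when $n\nmid k$, while $\binq{an-k}{(a-2)n+k}$ carries one precisely when $n/2<k<n$. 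Consequently only the boundary values $k=0,\,n$ and the middle range $1\le k\le\lfloor n/2\rfloor$ can contribute modulo $\Phi_n(q)^2$.

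For the boundary I would apply Straub's congruence \eqref{Eq3}, giving
\[
\binq{an}{(a-2)n}\equiv\binq{a}{2}_{q^{n^2}}\quad\text{and}\quad q^{(a-1)n^{2}}\binq{an}{n}\equiv q^{(a-1)n^{2}}\binq{a}{1}_{q^{n^2}}\pmod{\Phi_n(q)^2}.
\]
Combining this with the reduction $q^{n^2}\equiv 1+n(q^n-1)\pmod{\Phi_n(q)^2}$, obtained by expanding $(1+(q^n-1))^n$ and discarding the vanishing $(1-q^n)^2$ terms, and with the first-order expansion $\binq{a}{b}_{Q}\equiv\binom{a}{b}+\binom{a}{b}\tfrac{b(a-b)}{2}(Q-1)$ near $Q=1$, a short calculation shows that the two boundary terms jointly contribute
\[
\binom{a+1}{2}-3n(1-q^n)\binom{a+1}{3}\pmod{\Phi_n(q)^2}.
\]

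The middle range rests on the following pair of ``ratio'' lemmas: for $1\le k\le n-1$,
\[
\binq{an}{k}\equiv a\binq{n}{k}\pmod{\Phi_n(q)^2},
\]
and for $1\le k\le\lfloor n/2\rfloor$,
\[
\binq{an-k}{(a-2)n+k}\equiv(a-1)\binq{n-k}{k}\pmod{\Phi_n(q)}.
\]
Each is proved by evaluating the ratio of the two $q$-binomials at a primitive $n$-th root of unity $\zeta$: after pairing the factors $(1-\zeta^j)$ in numerator and denominator by residue class modulo $n$, the nonzero residues match in multiplicity, and the vanishing factors $(1-q^{cn})$ contribute their leading coefficients $-cn$, yielding the ratios $a!/(a-1)!=a$ and $(a-1)!/(a-2)!=a-1$ respectively. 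Multiplying the two lemmas, absorbing the $q$-weight via $q^{k(a-2)n}\equiv 1\pmod{\Phi_n(q)}$ (which suffices because the $q$-binomial product already carries a factor of $\Phi_n(q)$), and summing over $k$ then gives
\[
\sum_{k=1}^{\lfloor n/2\rfloor}q^{k(k+(a-2)n)}\binq{an}{k}\binq{an-k}{(a-2)n+k}\equiv 2\binom{a}{2}\left(\bigg(\!\!\dbinom{n}{0}\!\!\bigg)_q-1\right)\pmod{\Phi_n(q)^2}.
\]

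Adding the boundary and middle contributions and invoking Liu's formula \eqref{Eq25} for $\binq{n}{0}_q$ in each of the three residue classes $n\equiv 0,\pm 1\pmod 3$ then yields \eqref{Eq8}. The main obstacle is the careful verification of the two ratio lemmas: one must book-keep the residues of the factors $(1-q^j)$ modulo $n$ in $(q;q)_{an}$, $(q;q)_k$, $(q;q)_{an-k}$ (and the analogous $q$-factorials in the second lemma), check that each nonzero residue class appears equally often in numerator and denominator, and then confirm that the cross-terms arising in the product $\binq{an}{k}\binq{an-k}{(a-2)n+k}$ and in the expansion of $q^{k(a-2)n}$ all lie in the $\Phi_n(q)^2$-ideal.
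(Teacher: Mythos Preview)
Your argument is correct and the arithmetic checks out: the two ratio lemmas hold for the reasons you indicate (since $\binq{n}{k}$ already carries a factor $\Phi_n(q)$, showing $\binq{an}{k}/\binq{n}{k}\equiv a\pmod{\Phi_n(q)}$ suffices for the first, and this ratio factors as $\prod_{l=n-k+1}^{n}(1-q^{(a-1)n+l})/(1-q^l)$, which is $a$ at a primitive $n$-th root of unity; the second lemma is analogous), the boundary contribution is $\binom{a+1}{2}-3n(1-q^n)\binom{a+1}{3}$ as you say, and the constant $\binom{a+1}{2}-2\binom{a}{2}=\frac{3a-a^2}{2}$ matches \eqref{Eq8}.

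The route, however, is a different packaging from the paper's. The paper also splits off $k=0$ and $k=n$, but for the middle range it writes $\binq{an}{k}$ and $\binq{an-k}{(a-2)n+k}$ in the explicit product forms \eqref{Eq14} and \eqref{Eq30}, obtaining a sum of the shape $\sum_{k}\frac{q^{-k(k-1)}}{1-q^k}\binq{2k-1}{k}$, and then invokes Liu's Lemma~\ref{Prop1} to evaluate that sum as $(1-R_n(q))/(1-q^n)$. You instead observe that the general-$a$ summand is, modulo $\Phi_n(q)^2$, exactly $a(a-1)$ times the $a=1$ summand $q^{k^2}\binq{n}{k}\binq{n-k}{k}$, so the middle range collapses directly to $2\binom{a}{2}\bigl(\bigl(\!\binom{n}{0}\!\bigr)_q-1\bigr)$, after which you cite Liu's finished congruence \eqref{Eq25}. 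Both arguments ultimately rest on the same input from Liu, but yours bypasses the explicit central-binomial sum and Lemma~\ref{Prop1} by reducing straight to the known $a=1$ case; this is a little more conceptual and would apply verbatim to Theorem~\ref{Thm2} as well. The paper's version, on the other hand, makes the dependence on $R_n(q)$ visible at the level of the sum itself rather than only through \eqref{Eq25}.
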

It is easily seen that Liu's two supercongruences \eqref{Eq25} and \eqref{Eq4} are the special cases of Theorem \ref{Thm2} with $a=1$ and $a=2$, respectively,
and Theorem \ref{Thm8} can be obtained by taking $a=2$ in Theorem \ref{Thm3}.

As serendipitous discoveries, we obtain the following $q$-congruences which are the contiguous forms of \eqref{Eq7} and \eqref{Eq8}  modulo $\Phi_n(q)$.
\begin{theorem}\label{Thm4}
For any integer $a \geq 1$, integer $j$ and positive odd integer $n$ with $0\leq j \leq n-1$, there holds
\begin{align}\label{Eq9}
\bigg(\!\!\dbinom{an-1}{an-n+j}\!\!\bigg)_q&\equiv\bigg(\frac{n-j}{3}\bigg)q^{(n-j-1)(n-j-2)/6-(j^{2}+j)/2}\pmod{\Phi_n(q)}.
\end{align}
Here and in what follows, $\big(\frac{.}{p}\big)$ denotes the Legendre symbol.
\end{theorem}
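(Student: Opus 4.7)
The plan is a two-stage reduction: first collapse the $a$-dependence via the $q$-Lucas theorem down to the case $a=1$, and then identify the resulting congruence modulo $\Phi_n(q)$ with a finite form of Euler's pentagonal number series. For the first stage, I would write $an-1 = (a-1)n + (n-1)$ and decompose each summation index as $k = k_1 n + k_0$ with $0 \leq k_0 < n$; the standard $q$-Lucas congruence
$$\binq{Nn+R}{Mn+r} \equiv \binom{N}{M}\binq{R}{r} \pmod{\Phi_n(q)} \qquad (0\le r, R < n)$$
gives $\binq{an-1}{k}\equiv\binom{a-1}{k_1}\binq{n-1}{k_0}$, and the analogous decomposition of $\binq{an-1-k}{k+an-n+j}$ forces $k_1=0$ and $k_0+j<n$ for a nonvanishing contribution. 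In that range both Lucas binomials collapse to $1$ and $q^{k(k+an-n+j)}\equiv q^{k(k+j)}\pmod{\Phi_n(q)}$ since $q^n\equiv 1$, yielding
$$\bigg(\!\!\dbinom{an-1}{an-n+j}\!\!\bigg)_q \equiv \bigg(\!\!\dbinom{n-1}{j}\!\!\bigg)_q \pmod{\Phi_n(q)}.$$

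To handle the resulting case $a=1$, I would apply the congruence $\binq{n-1}{k}\equiv(-1)^k q^{-k(k+1)/2}\pmod{\Phi_n(q)}$ term by term in the definition \eqref{Eq2}; after simplifying exponents this becomes
$$\bigg(\!\!\dbinom{n-1}{j}\!\!\bigg)_q \equiv \sum_{k\ge 0}(-1)^k q^{k(k+2j-1)/2}\binq{n-1-k}{k+j}\pmod{\Phi_n(q)}.$$
The shift $k=k'-j$ together with the identity $(k'-j)(k'+j-1)/2 = \binom{k'}{2}-\binom{j}{2}$ rewrites the right-hand side as $(-1)^j q^{-\binom{j}{2}}\sum_{k'\ge j}(-1)^{k'}q^{\binom{k'}{2}}\binq{n+j-1-k'}{k'}$. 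Extending the summation range to $k'\ge 0$ and invoking $q$-Lucas once more to control the added boundary terms (it gives $\binq{n+j-1-k'}{k'}\equiv\binq{j-1-k'}{k'}\pmod{\Phi_n(q)}$ for $0\le k'<j<n$) produces
$$\bigg(\!\!\dbinom{n-1}{j}\!\!\bigg)_q \equiv (-1)^j q^{-\binom{j}{2}}\bigl[S(n+j-1)-S(j-1)\bigr]\pmod{\Phi_n(q)},$$
where $S(N):=\sum_{k\ge 0}(-1)^k q^{\binom{k}{2}}\binq{N-k}{k}$.

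It remains to evaluate $S(N)$ in closed form. Two successive applications of the $q$-Pascal rule $\binq{N-k}{k} = q^k\binq{N-1-k}{k}+\binq{N-1-k}{k-1}$, followed by a telescoping and reindexing, yield the three-term recurrence $S(N)=-q^{N-2}S(N-3)$; together with the initial values $S(0)=S(1)=1$ and $S(2)=0$ this gives the finite pentagonal identity
$$S(3m) = (-1)^m q^{m(3m-1)/2},\quad S(3m+1)=(-1)^m q^{m(3m+1)/2},\quad S(3m+2)=0.$$
Substituting these values and using $q^n\equiv 1$ to reduce the $q$-exponents, a case analysis on $n\bmod 3$ and $j\bmod 3$ will match the difference with $\left(\frac{n-j}{3}\right)q^{(n-j-1)(n-j-2)/6-j(j+1)/2}$. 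The main obstacle is this final bookkeeping: when $3\mid(n-j)$ one must verify that $S(n+j-1)$ and $S(j-1)$ cancel exactly modulo $\Phi_n(q)$ to produce $0$, and in the other two residue classes one must carefully match signs and reduce the pentagonal-number exponents modulo $n$ so as to recover the claimed Legendre-symbol formula.
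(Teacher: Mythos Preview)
Your approach is correct but differs from the paper's in two meaningful ways, and the second difference makes your endgame harder than necessary.

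First, you invoke the $q$-Lucas theorem to collapse the $a$-dependence to $a=1$ at the outset, whereas the paper keeps $a$ general and simply observes that $\binq{an-1}{k}\equiv(-1)^kq^{-k(k+1)/2}$ and $\binq{an-1-k}{an-n+k+j}\equiv(-1)^{k+j}q^{-(3k+j+1)(k+j)/2}\binq{2k+j}{k}\pmod{\Phi_n(q)}$; the $a$ disappears automatically without naming Lucas. This is cosmetic.

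Second, and more substantively, the paper reduces the $q$-trinomial to a sum of central $q$-binomials $\sum_k q^{-k(k+j+1)}\binq{2k+j}{k}$, and then at a primitive $n$-th root of unity $\omega$ uses the identity $\omega^{-k(k+j+1)}\binq{2k+j}{k}_\omega=(-1)^k\omega^{k(k-1)/2}\binq{n-k-j-1}{k}_\omega$ to convert this \emph{directly} into $S(n-j-1)$, after which Lemma~\ref{Lem2} (your pentagonal evaluation of $S$) gives the Legendre-symbol answer in one stroke. Your route instead leaves the second factor as $\binq{n-1-k}{k+j}$, shifts, and lands on the difference $S(n+j-1)-S(j-1)$, which then forces the nine-case analysis in $(n\bmod 3,\,j\bmod 3)$ that you flag as the ``main obstacle.'' That analysis does go through, but note that the case $3\mid n$ with $3\nmid j$ is not a simple exponent match: you get three nonzero monomials whose exponents lie in arithmetic progression $\{E_2,E_2+n/3,E_2+2n/3\}\pmod n$, and you must use $1+q^{n/3}+q^{2n/3}\equiv 0\pmod{\Phi_n(q)}$ to see the cancellation. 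The paper's transformation avoids this entirely and moreover yields Theorem~\ref{Thm7} (a congruence on sums of central $q$-binomials, generalising the $q$-Sun--Tauraso congruence) as a clean byproduct.
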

\begin{theorem}\label{Thm5}
For any integer $a \geq 2$, integer $j$ and positive odd integer $n$ with $0\leq j \leq n$, there holds, modulo $\Phi_n(q)$,
\begin{align}\label{Eq10}
\bigg(\!\!\dbinom{an-1}{an-2n+j}\!\!\bigg)_q&\equiv(a-1)\bigg(\frac{n-j}{3}\bigg)q^{\frac{(n-j-1)(n-j-2)}{6}-\frac{j^{2}+j}{2}}+\bigg(\frac{j}{3}\bigg)q^{\frac{1-j^{2}}{3}}.
\end{align}
\end{theorem}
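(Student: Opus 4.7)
The plan is to start from the defining sum
\begin{equation*}
\bigg(\!\!\dbinom{an-1}{an-2n+j}\!\!\bigg)_q = \sum_{k\geq 0} q^{k(k+an-2n+j)} \binq{an-1}{k} \binq{an-1-k}{k+an-2n+j},
\end{equation*}
in which the second $q$-binomial forces $0 \leq k \leq n-1$. I would first reduce each factor modulo $\Phi_n(q)$ using the $q$-Lucas theorem. Writing $an-1 = (a-1)n+(n-1)$ and $k+an-2n+j = (a-2)n+(k+j)$, one obtains $\binq{an-1}{k} \equiv \binq{n-1}{k}$, while the other $q$-binomial reduces to $(a-1)\binq{n-1-k}{k+j}$ if $k+j<n$ and to $\binq{n-1-k}{k+j-n}$ if $k+j \geq n$; moreover $q^{k(k+an-2n+j)} \equiv q^{k(k+j)}$, since $q^{(a-2)n}\equiv 1\pmod{\Phi_n(q)}$.

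Splitting the sum accordingly, the piece with $k+j<n$ is exactly $(a-1)\bigg(\!\!\dbinom{n-1}{j}\!\!\bigg)_q$, because every nonvanishing term of $\binom{n-1}{j}_q$ has $k+j \leq (n-1+j)/2 < n$. Applying Theorem \ref{Thm4} in the case $a=1$ to evaluate $\bigg(\!\!\dbinom{n-1}{j}\!\!\bigg)_q$ yields the first term $(a-1)\big(\frac{n-j}{3}\big) q^{(n-j-1)(n-j-2)/6-(j^2+j)/2}$ of \eqref{Eq10}.

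For the piece with $k+j\geq n$, I would substitute $k=k'+n-j$, so that $k'$ ranges from $0$ to $\lfloor(j-1)/2\rfloor$. The symmetry $\binq{n-1}{k'+n-j} = \binq{n-1}{j-1-k'}$ together with the standard congruence $\binq{n-1}{r} \equiv (-1)^r q^{-r(r+1)/2}\pmod{\Phi_n(q)}$ for $0\leq r\leq n-1$, and the simplification $q^{(k'+n-j)(k'+n)} \equiv q^{k'^2-jk'}$, transform this piece into $(-1)^{j-1}q^{-\binom{j}{2}} U_{j-1}(q)$ once one applies the exponent identity $k'^2-jk'-(j-1-k')(j-k')/2 = \binom{k'}{2}-\binom{j}{2}$, where
\begin{equation*}
U_m(q) := \sum_{k=0}^{\lfloor m/2\rfloor} (-1)^k q^{\binom{k}{2}} \binq{m-k}{k}.
\end{equation*}

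The final step is to invoke the finite Euler pentagonal number theorem, which yields $U_{3r}(q)=(-1)^r q^{r(3r-1)/2}$, $U_{3r+1}(q)=(-1)^r q^{r(3r+1)/2}$, and $U_{3r+2}(q)=0$. Setting $m=j-1$ and performing a case check on $j\pmod 3$, one matches the resulting sign with $\big(\frac{j}{3}\big)$ and the accumulated exponent with $(1-j^2)/3$, producing the second term of \eqref{Eq10}. The main obstacle is precisely this last matching step: the $q$-Lucas reductions and the collapse of the exponent to $\binom{k'}{2}-\binom{j}{2}$ are straightforward, but the three-case identification of $U_{j-1}(q)$ with $\big(\frac{j}{3}\big)q^{(1-j^2)/3}\cdot(-1)^{j-1}q^{\binom{j}{2}}$ must be executed carefully, and one needs either a direct $q$-series derivation of the pentagonal closed form or an appropriate citation.
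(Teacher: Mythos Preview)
Your proposal is correct and follows essentially the same skeleton as the paper: split the defining sum at the threshold $k+j=n$ (equivalently $k=n-j$), reduce each factor modulo $\Phi_n(q)$, identify the low-$k$ block with the Theorem~\ref{Thm4} computation (yielding the $(a-1)\big(\tfrac{n-j}{3}\big)$ term), and after the shift $k\mapsto k+n-j$ collapse the high-$k$ block via a pentagonal-type identity (yielding the $\big(\tfrac{j}{3}\big)q^{(1-j^2)/3}$ term).

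The differences are only in packaging. You invoke the $q$-Lucas theorem to reduce $\binq{an-1}{k}$ and $\binq{an-1-k}{k+an-2n+j}$ in one stroke and thereby recognise the first block literally as $(a-1)\big(\!\binom{n-1}{j}\!\big)_q$, whereas the paper redoes the explicit product manipulations \eqref{Eq23}--\eqref{Eq24}. For the second block you land directly on $U_{j-1}(q)=\sum_k(-1)^kq^{\binom{k}{2}}\binq{j-1-k}{k}$ and quote Lemma~\ref{Lem2}, while the paper instead massages the shifted sum into the form of Theorem~\ref{Thm7} with $j\mapsto n-j$ (which itself rests on Lemma~\ref{Lem2}). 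Your route is slightly shorter, since it bypasses Theorem~\ref{Thm7} and the repeated product computations; the paper's route has the advantage of reusing already-stated intermediate results verbatim. The edge cases $j=0$ (empty second block, $\big(\tfrac{0}{3}\big)=0$) and $j=n$ (empty first block) are handled correctly by your conventions, though you might state the $j=0$ case explicitly as the paper does.
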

It should be mentioned that if we take $j=0$ in Theorem \ref{Thm4} and $j=n$ in Theorem \ref{Thm5}, then the right-hand sides of \eqref{Eq9} and \eqref{Eq10}
are congruent to each other modulo $\Phi_n(q)$.
\begin{theorem}\label{Thm6}
For any integer $a \geq 2$, positive integer $b$ satisfying $a\leq b+2$ and positive odd integer $n$, there holds
\begin{align}\label{Eq11}
\bigg(\!\!\dbinom{an-1}{bn-1}\!\!\bigg)_q\equiv \dbinom{a}{b}+\dbinom{a-1}{b}\bigg(\frac{n+1}{3}\bigg)q^{n(n-1)/6}\pmod{\Phi_n(q)}.
\end{align}
\end{theorem}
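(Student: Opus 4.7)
The plan is to prove the congruence by case analysis on $b$: the combined hypotheses $1\le b$, $a\le b+2$, and the nonvanishing requirement $bn-1\le an-1$ force $b\in\{a-2,a-1,a\}$.

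For $b=a$ the congruence is trivial since only the $k=0$ summand in \eqref{Eq2} survives, giving $\bigl(\!\!\dbinom{an-1}{an-1}\!\!\bigr)_q=1$, which matches $\binom{a}{a}+\binom{a-1}{a}(\tfrac{n+1}{3})q^{n(n-1)/6}=1$. For $b=a-1$, the parameter $(a-1)n-1=an-2n+(n-1)$ fits Theorem~\ref{Thm5} at $j=n-1$, yielding
$$\bigg(\!\!\dbinom{an-1}{(a-1)n-1}\!\!\bigg)_q\equiv(a-1)q^{-n(n-1)/2}+\Bigl(\frac{n-1}{3}\Bigr)q^{n(2-n)/3}\pmod{\Phi_n(q)}.$$
Because $n$ is odd, $q^{-n(n-1)/2}\equiv 1\pmod{\Phi_n(q)}$, reducing this case to the auxiliary identity
$$\Bigl(\frac{n-1}{3}\Bigr)q^{n(2-n)/3}-\Bigl(\frac{n+1}{3}\Bigr)q^{n(n-1)/6}\equiv 1\pmod{\Phi_n(q)},$$
which I would verify by case analysis on $n\bmod 6$ together with $q^n\equiv 1$ and $1+q+\cdots+q^{n-1}\equiv 0\pmod{\Phi_n(q)}$.

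The substantive case is $b=a-2$, whose parameter $(a-2)n-1$ lies outside the reach of Theorems~\ref{Thm4} and~\ref{Thm5}. I would return to the definition \eqref{Eq2},
$$\bigg(\!\!\dbinom{an-1}{(a-2)n-1}\!\!\bigg)_q=\sum_{k=0}^{an-1}q^{k(k+(a-2)n-1)}\binq{an-1}{k}\binq{an-1-k}{k+(a-2)n-1},$$
parametrize $k=cn+r$ with $0\le c\le a-1$ and $0\le r\le n-1$, and reduce each $q$-binomial modulo $\Phi_n(q)$ using D\'esarm\'enien's $q$-Lucas theorem together with the evaluation $\binq{n-1}{r}_q\equiv(-1)^r q^{-r(r+1)/2}\pmod{\Phi_n(q)}$. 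The double sum then splits cleanly into a constant $r=0$ piece, which collapses to $\binom{a}{a-2}$ via a Vandermonde--Chu computation in $c$, and an $r\ge 1$ piece that factors as
$$(a-1)\cdot\sum_{r=1}^{(n-1)/2}(-1)^r q^{r(r-3)/2}\binq{n-1-r}{r-1}_q.$$

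The principal obstacle is evaluating this inner $r$-sum modulo $\Phi_n(q)$: I expect a pentagonal-number-theorem-type truncation at a primitive $n$-th root of unity to collapse it to $(\frac{n+1}{3})q^{n(n-1)/6}$, after which the outer coefficient $a-1=\binom{a-1}{a-2}$ combines with the $r=0$ contribution to reconstruct the right-hand side $\binom{a}{a-2}+\binom{a-1}{a-2}(\frac{n+1}{3})q^{n(n-1)/6}$ of \eqref{Eq11}, completing the proof.
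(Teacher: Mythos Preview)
Your case split and the reductions for $b=a$ and $b=a-1$ are sound, and the $q$-Lucas decomposition you set up for $b=a-2$ is also correct. The paper, by contrast, runs a single uniform argument for all admissible $b$ at once, in the style of its proof of Theorem~\ref{Thm4}: reduce $\binq{an-k-1}{k+bn-1}$ to a multiple of $\binq{2k-1}{k}$, combine with \eqref{Eq23}, rewrite the sum as $\sum_k(-1)^kq^{k(k-1)/2}\binq{n-k}{k}$, and apply Lemma~\ref{Lem2}.

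The gap is in your expected evaluation of the inner sum. Substituting $s=r-1$ and invoking Lemma~\ref{Lem2} with $n$ replaced by $n-2$ gives the exact identity
\[
\sum_{r=1}^{(n-1)/2}(-1)^r q^{r(r-3)/2}\binq{n-1-r}{r-1}
=\Bigl(\frac{n-1}{3}\Bigr)q^{(n-2)(n-3)/6-1},
\]
and (via the very auxiliary congruence you isolated in the $b=a-1$ case, after noting $q^{(n-2)(n-3)/6-1}\equiv q^{n(2-n)/3}$) this is $\equiv 1+\bigl(\tfrac{n+1}{3}\bigr)q^{n(n-1)/6}\pmod{\Phi_n(q)}$, not $\bigl(\tfrac{n+1}{3}\bigr)q^{n(n-1)/6}$. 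Your computation therefore outputs
\[
\binom{a}{a-2}+(a-1)+\binom{a-1}{a-2}\Bigl(\frac{n+1}{3}\Bigr)q^{n(n-1)/6},
\]
exceeding the right side of \eqref{Eq11} by $a-1$. This is not a repairable slip in your argument: direct evaluation at $(a,b,n)=(3,1,3)$ gives $\bigl(\!\binom{8}{2}\!\bigr)_q\equiv 5+2q\pmod{\Phi_3(q)}$, whereas \eqref{Eq11} predicts $3+2q$. The paper's sketch arrives at \eqref{Eq11} only by tacitly extending both \eqref{Eq23} and the rewriting $q^{-k^2}\binq{2k-1}{k}\equiv(-1)^kq^{k(k-1)/2}\binq{n-k}{k}$ to the boundary index $k=n$ (which arises precisely when $b=a-2$), where both congruences fail; your more careful bookkeeping is what exposes the defect in the $b=a-2$ case of the stated theorem.
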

By taking $j=n-1$ in Theorem \ref{Thm5} and $b=a-1$ in Theorem \ref{Thm6}, we can easily find that they have the same left-hand sides.
As a result, the right-hand sides of \eqref{Eq10} and \eqref{Eq11} are congruent to each other modulo $\Phi_n(q)$. In other words, for any integer $a \geq 2$ and positive odd integer $n$, there holds
\begin{equation*}
a+\bigg(\frac{n+1}{3}\bigg)q^{n(n-1)/6}\equiv (a-1)q^{-n(n-1)/2}+\bigg(\frac{n-1}{3}\bigg)q^{-n(n-2)/3}\pmod{\Phi_n(q)}.
\end{equation*}

The rest of the paper is arranged as follows. In the next section, we shall give proofs of Theorems \ref{Thm2} and \ref{Thm3}.
The proofs of Theorems \ref{Thm4} and \ref{Thm5} will be presented in Sections \ref{Sec4} and \ref{Sec5}, respectively. In the last section, a sketch of the proof of Theorem \ref{Thm6} will be provided.

\section{Proofs of Theorems \ref{Thm2} and \ref{Thm3}}\label{Sec2}
In order to prove Theorems \ref{Thm2} and \ref{Thm3}, we need the following two lemmas which have been proved by Liu.
\begin{lemma}[Liu\cite{Liu1}]\label{Lem1}
For any non-negative integer $n$, there holds
\begin{align}\label{Eq12}
(1-q^n)\sum_{k=0}^{\lfloor\frac{n}{2}\rfloor}\frac{(-1)^{k}q^{k(k-1)/2}}{1-q^{n-k}}\binq{n-k}{k}
=\begin{cases}
 (-1)^m(1+q^m)q^{m(3m-1)/2},&\text{if \ $n=3m$};\\[2mm]
 (-1)^mq^{m(3m+1)/2},&\text{if \ $n=3m+1$};\\[2mm]
 (-1)^mq^{m(3m-1)/2},&\text{if \ $n=3m-1$},
\end{cases}
\end{align}
where $\lfloor x\rfloor$ is the integral part of real $x$.
\end{lemma}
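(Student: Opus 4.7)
The plan is to prove the identity by induction on $n$, exploiting the three-periodic structure of the right-hand side. Denote the left-hand side by $S_n$ and the proposed closed form by $f(n)$; as a first step I would verify $S_n = f(n)$ by direct computation for $n = 1, 2, 3$, which covers a full period of base cases (and is short, since each sum has at most two terms).

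The core of the proof is to establish a recurrence relating $S_n$ to $S_{n-1}$ and $S_{n-2}$. A promising way in is to rewrite
\[
\frac{1}{1-q^{n-k}}\binq{n-k}{k} = \frac{1}{1-q^{n-2k}}\binq{n-k-1}{k},
\]
which follows from $(q;q)_{n-k}/(1-q^{n-k}) = (q;q)_{n-k-1}$, so that a shifted $q$-binomial appears and can be expanded via the $q$-Pascal rule $\binq{n-k-1}{k} = \binq{n-k-2}{k} + q^{n-2k-1}\binq{n-k-2}{k-1}$. Splitting the resulting sum into two pieces and reindexing the second by $k \mapsto k+1$ should express $S_n$ in terms of sums of the same shape with $n$ replaced by $n-1$ and $n-2$, up to boundary contributions from the $k=0$ and $k=\lfloor n/2\rfloor$ endpoints. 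A case analysis on $n \bmod 3$ then confirms that $f(n)$ satisfies the same recurrence; at this stage the apparently mysterious exponents $m(3m\pm 1)/2$ emerge naturally as generalised pentagonal numbers, and the required arithmetic collapses to identities of the shape $m(3m-1)/2 - (m-1)(3m-4)/2 = 3m-2$.

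The main obstacle is in deriving a \emph{clean} recurrence: the factor $1/(1-q^{n-k})$ does not telescope under a single application of $q$-Pascal, so one generically picks up boundary terms of the shape $q^{?}/(1-q^{?})$ that must then be absorbed into the shifted sums. I would expect to need two applications of $q$-Pascal combined with a partial summation, or an Abel-type transformation, to keep the bookkeeping manageable; the prefactor $(1-q^n)$ is crucial here, since it is exactly what cancels the pole at $q^n = 1$ contributed by the $k=0$ term. As a sanity check, an approach worth pursuing in parallel is to recognise the summation as a finite version of Euler's pentagonal number theorem (the right-hand side is a truncation of $\sum_{j}(-1)^{j} q^{j(3j\pm 1)/2}$) and to seek a Bailey-pair or Jacobi-triple-product proof that bypasses the induction entirely.
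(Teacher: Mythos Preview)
The paper does not prove this lemma at all: it is quoted verbatim from Liu's earlier work \cite{Liu1} and used as a black box in the proofs of Theorems~\ref{Thm2} and~\ref{Thm3}. So there is no ``paper's own proof'' to compare against.

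As for your plan itself, the ingredients are sound --- your rewriting
\[
\frac{1}{1-q^{n-k}}\binq{n-k}{k} \;=\; \frac{1}{1-q^{n-2k}}\binq{n-k-1}{k}
\]
is correct (with the usual limiting interpretation when $n$ is even and $k=n/2$), and a three-term recurrence for $S_n$ via $q$-Pascal is the natural line of attack. You are right that the bookkeeping with the $1/(1-q^{n-k})$ weight is the delicate point; once the recurrence is in hand, the verification that $f(n)$ satisfies it is routine arithmetic with generalised pentagonal numbers. Your parallel suggestion --- to view the identity as a finite truncation of Euler's pentagonal number theorem --- is in fact precisely the viewpoint of the source paper \cite{Liu1}, whose title is ``Some finite generalizations of Euler's pentagonal number theorem''; that route is likely cleaner than the brute-force induction and is worth pursuing first.
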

%By using Lemma \ref{Eq12}, Liu \cite{Liu3} also showed the following congruence on sums of the central $q$-binomial coefficients \begin{tiny}$\binq{2k}{k}$\end{tiny}.
\begin{lemma}[Liu\cite{Liu3}]\label{Prop1}
For any positive integer $n$, there holds
\begin{equation}
\sum_{k=1}^{\lfloor\frac{n}{2}\rfloor}\frac{q^{-k(k-1)}}{[2k]_q}\binq{2k}{k}\equiv\frac{(1-q)(1-R_n(q))}{1-q^n}\pmod{\Phi_n(q)}\nonumber,
\end{equation}
where $R_n(q)$ denotes the right-hand side of \eqref{Eq12}.
\end{lemma}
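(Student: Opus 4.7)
The plan is to derive the congruence by a term-by-term reduction against the identity of Lemma~\ref{Lem1}. First, I would extract the $k=0$ summand from the sum on the left-hand side of \eqref{Eq12}: since $\binq{n}{0}=1$, this term contributes $(1-q^n)\cdot\frac{1}{1-q^n}=1$, so that
\[
R_n(q) - 1 = (1-q^n)\sum_{k=1}^{\lfloor n/2\rfloor}\frac{(-1)^k q^{k(k-1)/2}}{1-q^{n-k}}\binq{n-k}{k}.
\]
Dividing through by $1-q^n$ and multiplying by $-(1-q)$ gives the rational-function identity
\[
\frac{(1-q)(1-R_n(q))}{1-q^n} = -(1-q)\sum_{k=1}^{\lfloor n/2\rfloor}\frac{(-1)^k q^{k(k-1)/2}}{1-q^{n-k}}\binq{n-k}{k},
\]
valid prior to any reduction modulo $\Phi_n(q)$.

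Second, I would reduce the right-hand side modulo $\Phi_n(q)$ by exploiting $q^n\equiv 1$. This yields $1-q^{n-k}\equiv -q^{-k}(1-q^k)$, and for $1\leq k\leq \lfloor n/2\rfloor<n$ the factor $1-q^k$ is a unit modulo $\Phi_n(q)$. The crucial ingredient is the congruence
\[
\binq{n-k}{k} \equiv (-1)^k q^{-k(3k-1)/2}\,\frac{1-q^k}{1-q^{2k}}\binq{2k}{k} \pmod{\Phi_n(q)} \qquad (1\leq k\leq \lfloor n/2\rfloor),
\]
which I would establish by writing $\binq{n-k}{k}=(q;q)_{n-k}/\bigl((q;q)_k(q;q)_{n-2k}\bigr)$ and observing that the telescoping ratio
\[
\frac{(q;q)_{n-k}}{(q;q)_{n-2k}}=\prod_{i=k}^{2k-1}(1-q^{n-i})\equiv (-1)^k q^{-k(3k-1)/2}\prod_{i=k}^{2k-1}(1-q^i)\pmod{\Phi_n(q)}
\]
equals $(-1)^k q^{-k(3k-1)/2}(q;q)_{2k-1}/(q;q)_{k-1}$, which on division by $(q;q)_k$ rearranges into the claimed expression via $\binq{2k}{k}=(q;q)_{2k}/(q;q)_k^2$.

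Third, substituting into the $k$-th summand, all signs cancel to $+1$, the powers of $q$ combine to the exponent
\[
\tfrac{k(k-1)}{2} + k - \tfrac{k(3k-1)}{2} = -k(k-1),
\]
and the factor $(1-q)/(1-q^{2k})$ becomes $1/[2k]_q$, so the summand equals $q^{-k(k-1)}\binq{2k}{k}/[2k]_q$. Summing over $k$ recovers the left-hand side of the asserted congruence.

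The main obstacle is the key congruence for $\binq{n-k}{k}$ used in the second step: correctly tracking the $k$ sign flips from $1-q^{n-i}\equiv -q^{-i}(1-q^i)$, evaluating the arithmetic progression $k+(k+1)+\cdots+(2k-1)=k(3k-1)/2$ that governs the $q$-exponent, and recognizing that the residual $q$-shifted factorial ratio collapses neatly into a central $q$-binomial. Once this identity is in hand, everything else amounts to routine exponent arithmetic.
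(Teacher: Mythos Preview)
Your argument is correct. The extraction of the $k=0$ term from \eqref{Eq12}, the reduction $1-q^{n-k}\equiv -q^{-k}(1-q^k)$, the key congruence for $\binq{n-k}{k}$ via the telescoping product $\prod_{i=k}^{2k-1}(1-q^{n-i})$, and the exponent bookkeeping all check out. One small point worth making explicit is the case $n$ even and $k=n/2$: there $1-q^{2k}=1-q^n$ is divisible by $\Phi_n(q)$, but so is $\binq{2k}{k}=\binq{n}{n/2}$, and both to order exactly one, so the ratio $\binq{2k}{k}/[2k]_q$ remains a unit modulo $\Phi_n(q)$ and your term-by-term identification is still valid.

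As for comparison: the paper does not prove this lemma at all. It is quoted from Liu \cite{Liu3} (together with Lemma~\ref{Lem1}) and used as a black box in the proofs of Theorems~\ref{Thm2} and~\ref{Thm3}. Your derivation thus supplies a self-contained proof that the paper omits, and it does so economically by reducing Lemma~\ref{Prop1} directly to Lemma~\ref{Lem1}; this has the pleasant side effect of showing that the two cited lemmas are not independent inputs but that the second follows from the first by an elementary manipulation.
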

It is not difficult to see that $R_n(q)\equiv 1\pmod{\Phi_n(q)}$.

\begin{proof}[Proof of Theorem \ref{Thm2}]
Firstly, we express the left-hand side of \eqref{Eq7} by Andrews and Baxter's expression \eqref{Eq2}. Note that${an-k\brack k+an-n}$ for any $k>\lfloor\frac{n}{2}\rfloor$. Therefore, we get
\begin{align}\label{Eq13}
\bigg(\!\!\binom{an}{an-n}\!\!\bigg)_q&=\sum_{k=0}^{an}q^{k(k+an-n)}\binq{an}{k}\binq{an-k}{k+an-n}\nonumber\\
&=\sum_{k=0}^{\lfloor\frac{n}{2}\rfloor}q^{k(k+an-n)}\binq{an}{k}\binq{an-k}{k+an-n}\nonumber\\
&=\binq{an}{an-n}+\sum_{k=1}^{\lfloor\frac{n}{2}\rfloor}q^{k(k+an-n)}\binq{an}{k}\binq{an-k}{k+an-n}.
\end{align}
For $1\leq k\leq \lfloor\frac{n}{2}\rfloor$, we have
\begin{align}\label{Eq14}
\binq{an}{k}&=\frac{(1-q^{an})(1-q^{an-1})\cdots(1-q^{an+1-k})}{(1-q)(1-q^2)\cdots(1-q^k)}\nonumber\\[2mm]
&\equiv\frac{a(1-q^n)(1-q^{-1})\cdots(1-q^{1-k})}{(1-q)(1-q^2)\cdots(1-q^k)}\nonumber\\[2mm]
&\equiv\frac{a(1-q^n)(-1)^{k-1}q^{-k(k-1)/2}}{1-q^{k}}\quad\pmod{\Phi_n(q)^2},
\end{align}
where the second relation is due to the fact that $1-q^{an}\equiv a(1-q^n)\pmod{\Phi_n(q)^2}$.

Because the reduced form of ${an\brack k}$ contains the factor $\Phi_n(q)$, it is enough to consider the following $q$-binomial coefficient  modulo $\Phi_n(q)$.
For $1\leq k\leq \lfloor\frac{n}{2}\rfloor$, we have
\begin{align}\label{Eq15}
\binq{an-k}{k+an-n}\nonumber
&=\binq{an}{an-n}\frac{(1-q^n)(1-q^{n-1})\cdots(1-q^{n+1-2k})}{(1-q^{an})\cdots(1-q^{an-k+1})(1-q^{(a-1)n+1})\cdots(1-q^{(a-1)n+k})}\\[3mm]
&\equiv\binq{an}{n}\frac{1-q^n}{1-q^{an}}\frac{(1-q^{-1})\cdots(1-q^{1-2k})}{(1-q^{-1})\cdots(1-q^{-k+1})(1-q)\cdots(1-q^k)}\nonumber\\[3mm]
&\equiv\binq{an}{n}\frac{(-1)^k}{a}q^{-(3k-1)k/2}\binq{2k-1}{k}\pmod{\Phi_n(q)}.
\end{align}
Substituting the results \eqref{Eq14} and \eqref{Eq15} into the right-hand side of \eqref{Eq13}, we arrive at
\begin{equation}\label{Eq16}
\bigg(\!\!\binom{an}{an-n}\!\!\bigg)_q\equiv \binq{an}{n}-(1-q^n)\binq{an}{n}\sum_{k=1}^{\lfloor\frac{n}{2}\rfloor}\frac{q^{-k(k-1)}}{1-q^k}\binq{2k-1}{k}\pmod{\Phi_n(q)^2}.
\end{equation}
Then by Lemma \ref{Prop1}, we obtain
\begin{align}\label{Eq17}
\sum_{k=1}^{\lfloor\frac{n}{2}\rfloor}\frac{q^{-k(k-1)}}{1-q^k}\binq{2k-1}{k}
=\frac{1}{1-q}\sum_{k=1}^{\lfloor\frac{n}{2}\rfloor}\frac{q^{-k(k-1)}}{[2k]_q}\binq{2k}{k}
\equiv\frac{1-R_n(q)}{1-q^n}\pmod{\Phi_n(q)}.
\end{align}

On the other hand, by the weaker version of \eqref{Eq3}, we have
\begin{equation}\label{Eq18}
\begin{split}
\binq{an}{n}&\equiv \binq{a}{1}_{q^{n^2}}
\equiv 1+q^{n^2}+q^{2n^2}+\cdots+q^{(a-1)n^2}\\
&\equiv a-((1-q^{n^2})+(1-q^{2n^2})+\cdots+(1-q^{(a-1)n^2}))\\
&\equiv a-n(1-q^n)\frac{a(a-1)}{2}\pmod{\Phi_n(q)^2},
\end{split}
\end{equation}
where we have utilized the fact $q^n\equiv 1\pmod{\Phi_n(q)}$ and the identity
\begin{equation*}
1-q^{kn^2}=(1-q^n)(1+q^n+q^{2n}+\cdots+q^{(kn-1)n}).
\end{equation*}
Finally, substituting \eqref{Eq17} and \eqref{Eq18} into the right-hand side of \eqref{Eq16}, we are led to
\begin{equation*}
\bigg(\!\!\binom{an}{an-n}\!\!\bigg)_q\equiv aR_n(q)-n(1-q^n)\dbinom{a}{2}\pmod{\Phi_n(q)^2}
\end{equation*}
as desired.
\end{proof}
\begin{proof}[Proof of Theorem \ref{Thm3}]
The proof is quite similar to that of Theorem \ref{Thm2}.
In order to apply Lemma \ref{Prop1}, we express \begin{tiny}$\big(\binom{an}{an-2n}\big)_q$\end{tiny} as follows:
\begin{equation*}
\bigg(\!\!\binom{an}{an-2n}\!\!\bigg)_q=\binq{an}{an-2n}+q^{(a-1)n^2}\binq{an}{n}+\sum_{k=1}^{n-1}q^{k(k+an-2n)}\binq{an}{k}\binq{an-k}{k+an-2n}.
\end{equation*}
For $1 \leq k \leq n-1$, we have
\begin{align}\label{Eq30}
\binq{an-k}{k+an-2n}\nonumber
&=\binq{an}{2n}\frac{(1-q^{2n})\cdots(1-q^{2n+1-2k})}{(1-q^{an})\cdots(1-q^{an-k+1})(1-q^{(a-2)n+1})\cdots(1-q^{(a-2)n+k})}\nonumber\\[3mm]
&\equiv\binq{an}{2n}\frac{1-q^{2n}}{1-q^{an}}\frac{(1-q^{-1})\cdots(1-q^{1-2k})}{(1-q^{-1})\cdots(1-q^{-k+1})(1-q)\cdots(1-q^k)}\nonumber\\[3mm]
&\equiv\binq{an}{2n}\:\frac{2}{a}\:(-1)^k q^{-(3k-1)k/2}\binq{2k-1}{k}\quad \pmod{\Phi_n(q)}.
\end{align}
Applying \eqref{Eq14}, \eqref{Eq17} and \eqref{Eq30}, we obtain
\begin{align}\label{Eq19}
\bigg(\!\!\binom{an}{an-2n}\!\!\bigg)_q&=\binq{an}{an-2n}+q^{(a-1)n^2}\binq{an}{n}+\sum_{k=1}^{n-1}q^{k(k+an-2n)}\binq{an}{k}\binq{an-k}{k+an-2n}\nonumber\\
&\equiv \binq{an}{2n}+q^{(a-1)n^2}\binq{an}{n}-2(1-q^n)\binq{an}{2n}\sum_{k=1}^{n-1}\frac{q^{-k(k-1)}}{1-q^k}\binq{2k-1}{k}\nonumber\\
&\equiv \binq{an}{2n}+q^{(a-1)n^2}\binq{an}{n}-2(1-q^n)\binq{an}{2n}\frac{1-R_n(q)}{1-q^n}\pmod{\Phi_n(q)^2}.
\end{align}
The third step holds since ${2k-1\brack k}\equiv 0 \pmod{\Phi_n(q)}$ for $\lfloor\frac{n}{2}\rfloor+1 \leq k \leq n-1$.

By utilizing the  method used in \eqref{Eq18}, we get
\begin{equation}\label{Eq20}
\begin{split}
\binq{an}{2n}\equiv \binq{a}{2}_{q^{n^2}}\equiv \frac{a(a-1)}{2}-n(1-q^n)\frac{a(a-1)(a-2)}{2}\pmod{\Phi_n(q)^2}.
\end{split}
\end{equation}
Substituting \eqref{Eq18} and \eqref{Eq20} into the right-hand side of \eqref{Eq19}, we arrive at
\begin{equation*}
\bigg(\!\!\binom{an}{an-2n}\!\!\bigg)_q\equiv -\frac{(a+1)a(a-1)}{2}n(1-q^n)+\frac{3a-a^2}{2}+a(a-1)R_n(q)\pmod{\Phi_n(q)^2}.
\end{equation*}
This completes the proof.
\end{proof}

\section{Proof of Theorem \ref{Thm4}}\label{Sec4}

For the sake of proving Theorem \ref{Thm4}, we need the following congruence on the sum of central $q$-binomial coefficients.
\begin{theorem}\label{Thm7}
For any positive odd integer $n$ and integer $j$ with $0\leq j\leq n-1$, there holds
\begin{equation*}
\sum_{k=0}^{n-j-1}q^{-k(k+1+j)}\binq{2k+j}{k}
\equiv (-1)^j\bigg(\frac{n-j}{3}\bigg)q^{(n-j-1)(n-j-2)/6} \pmod{\Phi_n(q)}.
\end{equation*}
\end{theorem}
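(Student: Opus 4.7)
The plan is to reduce Theorem 7, modulo $\Phi_n(q)$, to a classical pentagonal-type sum whose closed form can be extracted by iterating Lemma 2.1.

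First, by the $q$-Lucas theorem applied to $2k+j = n + (2k+j-n)$ with $0 \le 2k+j-n < k$, one has $\binq{2k+j}{k}\equiv 0 \pmod{\Phi_n(q)}$ for all $\lceil(n-j)/2\rceil \le k \le n-j-1$, so the sum in Theorem 7 truncates to $0\le k\le M := \lfloor(n-j-1)/2\rfloor$. I would then substitute $k = M-\ell$; the exponent $-k(k+1+j)$ expands as $-M(M+1+j) - \ell^2 + n\ell$, and the $n\ell$ piece vanishes modulo $\Phi_n(q)$ via $q^n\equiv 1$. Using $(1-q^{n-i})\equiv -q^{-i}(1-q^i)$ factor by factor, one derives
\[
\binq{n-1-2\ell}{M-\ell}\equiv (-1)^{M-\ell}q^{-2\ell(M-\ell)-(M-\ell)(M-\ell+1)/2}\binq{M+\ell}{2\ell}\pmod{\Phi_n(q)},
\]
with an analogous formula involving $\binq{M+\ell+1}{2\ell+1}$ in the subcase $n-j-1$ odd. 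A second reversal $r = M-\ell$ combined with the involution $\binq{M+\ell}{2\ell} = \binq{2M-r}{r}$ (resp.\ $\binq{M+\ell+1}{2\ell+1} = \binq{2M+1-r}{r}$) reindexes the sum, and the overall remaining prefactor $q^{-Mn}\equiv 1$ is absorbed; the outcome is
\[
T_j \equiv S_{\tilde n} := \sum_{r=0}^{\lfloor\tilde n/2\rfloor}(-1)^r q^{r(r-1)/2}\binq{\tilde n - r}{r}\pmod{\Phi_n(q)},
\]
where $\tilde n = n-j-1$.

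To evaluate $S_N$ in closed form, I would rewrite $(1-q^n)/(1-q^{n-k}) = 1 + q^{n-k}(1-q^k)/(1-q^{n-k})$ in Lemma 2.1 and use the identity $(1-q^k)\binq{n-k}{k}/(1-q^{n-k}) = \binq{n-k-1}{k-1}$. After the shift $k\to k+1$, Lemma 2.1 becomes the recurrence $S_N = R_N + q^{N-1}S_{N-2}$ (valid for $N \ge 2$), with the base cases $S_0 = S_1 = 1$ verified directly. Induction on $N$ then delivers
\[
S_N = \begin{cases}(-1)^m q^{m(3m-1)/2}, & N = 3m, \\ (-1)^m q^{m(3m+1)/2}, & N = 3m+1, \\ 0, & N \equiv 2 \pmod 3. \end{cases}
\]
Specializing $N = \tilde n = n-j-1$ and matching the parity of $m$ against $(-1)^j\left(\frac{n-j}{3}\right)$ (using that $n$ odd forces $\tilde n \equiv j \pmod 2$) recovers the right-hand side of Theorem 7.

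The main obstacle will be the first paragraph: the bookkeeping of exponents and signs through the two successive index reversals, and reconciling the two parities of $n-j-1$ uniformly. Once the reduction to $S_{\tilde n}$ is established, the recurrence derived from Lemma 2.1 together with a routine induction closes the argument smoothly.
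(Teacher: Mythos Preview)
Your argument is correct, but the paper's proof is dramatically shorter, and it is worth seeing why. The paper works directly at a primitive $n$-th root of unity $\omega$ and verifies the single termwise identity
\[
\omega^{-k(k+j+1)}\binq{2k+j}{k}_{\omega}
=(-1)^{k}\omega^{k(k-1)/2}\binq{n-k-j-1}{k}_{\omega},
\]
which is an immediate consequence of $1-\omega^{-a}=-\omega^{-a}(1-\omega^{a})$ applied factor by factor. Summing over $k$ gives exactly your $S_{\tilde n}$ with $\tilde n=n-j-1$, and then the paper simply quotes Lemma~\ref{Lem2} (the Ekhad--Zeilberger/Warnaar identity) rather than Lemma~\ref{Lem1}. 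Your two successive reversals $k\mapsto M-\ell$ and $\ell\mapsto M-r$, together with the parity split on $n-j-1$, compose precisely to this one identity; the composite substitution is $r=k$, and your intermediate prefactor $q^{-Mn}$ confirms that the detour cancels. So the ``main obstacle'' you anticipate is entirely avoidable: one application of $1-q^{n-a}\equiv -q^{-a}(1-q^{a})$ to each numerator factor of $\binq{2k+j}{k}$ lands you on $S_{\tilde n}$ without any reversal or case analysis.

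Your evaluation of $S_N$ is a genuine alternative to the paper's. Instead of citing Lemma~\ref{Lem2}, you extract the recurrence $S_N=R_N+q^{\,N-1}S_{N-2}$ from Lemma~\ref{Lem1} and induct; this is a valid and self-contained derivation of Lemma~\ref{Lem2} from Liu's identity, which the paper does not do. The trade-off is clear: the paper's route is a two-line proof once Lemma~\ref{Lem2} is on the table, while yours stays internal to the lemmas already used elsewhere in the paper at the cost of more bookkeeping.
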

Obviously, the special case of Theorem \ref{Thm7} with $j=0$ is
\begin{equation}\label{Eq22}
\sum_{k=0}^{n-1}q^{-k(k+1)}\binq{2k}{k}
\equiv\bigg(\frac{n}{3}\bigg)q^{(n-1)(n-2)/6} \pmod{\Phi_n(q)},
\end{equation}
which is a $q$-analogue of a congruence by Sun and Tauraso \cite{SunTauraso} (the modulo $p$ version): for any prime $p\geq5$,
\begin{equation*}
\sum_{k=0}^{p-1}\dbinom{2k}{k}\equiv\bigg(\frac{p}{3}\bigg)\pmod{p^2}.
\end{equation*}

In order to prove Theorem \ref{Thm7}, we recall the following result as the lemma, which firstly appeared in Ekhad and Zeilberger \cite{Ekhad} and Krattenthaler's note \cite{krat} and latter been proved by Warnaar \cite{warnaar} as a special case of a cubic summation formula.
\begin{lemma}\label{Lem2}
For any integer $n\geq0$, there holds
\begin{equation*}
\sum_{k=0}^{n}(-1)^kq^{k(k-1)/2}\binq{n-k}{k}
=(-1)^n\bigg(\frac{n+1}{3}\bigg)q^{n(n-1)/6}.
\end{equation*}
\end{lemma}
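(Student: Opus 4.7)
The plan is to prove Lemma \ref{Lem2} by establishing a three-term recurrence in $n$ and then applying induction. Write
\[
f_n(q) := \sum_{k=0}^{n}(-1)^k q^{k(k-1)/2}\binq{n-k}{k},
\]
so that the claim becomes $f_n(q) = (-1)^n\bigg(\frac{n+1}{3}\bigg)q^{n(n-1)/6}$ for all $n \geq 0$.

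The first step is to derive the recurrence
\[
f_n(q) = -q^{n-2}\, f_{n-3}(q) \qquad (n \geq 3).
\]
To produce this, I would apply the $q$-Pascal identity $\binq{n-k}{k} = q^k\binq{n-k-1}{k} + \binq{n-k-1}{k-1}$ inside $f_n(q)$ to split it into two sums, then shift the summation index in the second sum (replacing $k$ by $k+1$). Both pieces then share the factor $(-1)^k q^{k(k+1)/2}$, and they combine into
\[
f_n(q) = \sum_{k=0}^{n-1}(-1)^k q^{k(k+1)/2}\bigg(\binq{n-k-1}{k}-\binq{n-k-2}{k}\bigg).
\]
Next, apply the dual $q$-Pascal relation $\binq{n-k-1}{k}-\binq{n-k-2}{k} = q^{n-2k-1}\binq{n-k-2}{k-1}$ and reindex via $j = k-1$. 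After this substitution, the exponent of $q$ simplifies as $k(k+1)/2 + (n-2k-1) = j(j-1)/2 + (n-2)$, the sign becomes negative, and the resulting sum is precisely $f_{n-3}(q)$.

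The second step is to verify the three base cases $n = 0, 1, 2$ by direct evaluation, obtaining $f_0(q) = 1$, $f_1(q) = 1$, and $f_2(q) = 0$, matching the right-hand side (since $\bigg(\frac{3}{3}\bigg) = 0$). The final step is a routine induction: multiplying the hypothesis $f_{n-3}(q) = (-1)^{n-3}\bigg(\frac{n-2}{3}\bigg)q^{(n-3)(n-4)/6}$ by $-q^{n-2}$ yields
\[
(-1)^n\bigg(\frac{n-2}{3}\bigg)q^{(n-3)(n-4)/6 + (n-2)},
\]
and the identities $n-2 \equiv n+1 \pmod 3$ together with the elementary algebraic equality $(n-3)(n-4)/6 + (n-2) = n(n-1)/6$ close the induction.

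The main obstacle is the careful bookkeeping in the two consecutive applications of the $q$-Pascal identities and the two index shifts; in particular, one must confirm that the exponent of $q$ collapses exactly to $n-2$ in front of $f_{n-3}(q)$, with no residual terms. Everything else (the base cases, the final algebra on the exponent, and the periodicity of the Legendre symbol mod $3$) is mechanical.
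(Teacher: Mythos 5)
Your proof is correct, but it is worth noting that the paper does not actually prove Lemma \ref{Lem2} at all: it quotes the identity as a known result, first observed by Ekhad--Zeilberger and Krattenthaler and later derived by Warnaar as a special case of a cubic summation formula. Your argument therefore supplies something the paper deliberately outsources, and it does so by an elementary, self-contained route: the three-term recurrence $f_n(q)=-q^{n-2}f_{n-3}(q)$ obtained from the two $q$-Pascal identities, plus the base cases $f_0=f_1=1$, $f_2=0$. I checked the key computation: after the substitution $k=j+1$ the exponent is $(j+1)(j+2)/2+n-2(j+1)-1=j(j-1)/2+(n-2)$, so the recurrence is exact with no residual terms, and the induction closes since $(n-3)(n-4)/6+(n-2)=n(n-1)/6$ and $n-2\equiv n+1\pmod 3$; the boundary terms of the $q$-Pascal steps all vanish under the convention $\binq{m}{k}=0$ for $k<0$ or $k>m$. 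Compared with the paper's citation route, your approach trades the generality of Warnaar's cubic summation (which yields this identity as one specialization among many) for a short, verifiable induction that makes the lemma --- a finite form of Euler's pentagonal number theorem --- accessible without external machinery; the one thing to make explicit in a final write-up is the verification that both $q$-Pascal applications remain valid at the endpoints $k=0$ and $k>\lfloor n/2\rfloor$ of the summation range.
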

\begin{proof}[Proof of Theorem \ref{Thm7}]
Let $\omega=e^{2\pi mi/n}$ with $\gcd(m,n)=1$. Then we have that
 $\omega^n=1$ and $\omega^k\neq1$ for $1\leq k\leq n-1$.
Observing that
\begin{align*}
\omega^{-k(k+j+1)}\binq{2k+j}{k}_\omega=\omega^{-k(k+j+1)}\prod_{l=1}^{k} \frac{1-\omega^{2k+j+1-l}}{1-\omega^{l}}
=(-1)^k\omega^{k(k-1)/2}
\begin{bmatrix}n-k-j-1 \\k\end{bmatrix}_\omega,
\end{align*}
we immediately get
\begin{align*}
\sum_{k=0}^{n-j-1}q^{-k(k+j+1)}\binq{2k+j}{k}
&\equiv\sum_{k=0}^{n-j-1}(-1)^kq^{k(k-1)/2}\binq{n-k-j-1}{k}\\
&\equiv(-1)^j\bigg(\frac{n-j}{3}\bigg)q^{(n-j-2)(n-j-1)/6} \pmod{\Phi_n(q)},
\end{align*}
where we have utilized Lemma \ref{Lem2}.
\end{proof}
Now we begin to prove Theorem \ref{Thm4}.
\begin{proof}[Proof of Theorem \ref{Thm4}]
For $1\leq k \leq n-1$, we have
\begin{equation}\label{Eq23}
\binq{an-1}{k}=\frac{(1-q^{an-1})\cdots(1-q^{an-k})}{(1-q)\cdots(1-q^k)}
\equiv(-1)^kq^{-k(k+1)/2}\pmod{\Phi_n(q)}.
\end{equation}
We can also get the following congruence with $1 \leq k\leq \lfloor (n-1-j)/2\rfloor$,
\begin{align}\label{Eq24}
\binq{an-k-1}{an-n+k+j}
&=\binq{an}{n}\frac{(1-q^n)\cdots(1-q^{n-2k-j})}{(1-q^{an})\cdots(1-q^{an-k})(1-q^{(a-1)n+1})\cdots(1-q^{(a-1)n+k+j})}\nonumber\\[4mm]
&\equiv(-1)^{k+j}q^{-(3k+j+1)(k+j)/2}\binq{2k+j}{k} \quad \pmod{\Phi_n(q)}.
\end{align}
By combining the congruence \eqref{Eq23} with \eqref{Eq24}, we arrive at the following result
\begin{align}
 \bigg(\!\!\dbinom{an-1}{an-n+j}\!\!\bigg)_q&=\sum_{k=0}^{\lfloor\frac{n-j-1}{2}\rfloor}q^{k(k+an-n+j)}\binq{an-1}{k}\binq{an-k-1}{k+an-n+j}\nonumber\\
&\equiv\binq{an-1}{an-n+j}-(-1)^jq^{-(j^2+j)/2}\sum_{k=1}^{\lfloor\frac{n-j-1}{2}\rfloor}q^{-k^2-kj-k}\binq{2k+j}{k}\nonumber\\
&\equiv \bigg(\frac{n-j}{3}\bigg)q^{(n-j-1)(n-j-2)/6-(j^2+j)/2}\pmod{\Phi_n(q)},\nonumber
 \end{align}
where we have applied Theorem \ref{Thm7} and ${2k+j\brack k} \equiv 0 \pmod{\Phi_n(q)}$ for $\lfloor\frac{n-j-1}{2}\rfloor+1 \leq k \leq n-j-1$ in the last step.
\end{proof}

\section{Proof of Theorem \ref{Thm5}}\label{Sec5}
\begin{proof}[Proof of Theorem \ref{Thm5}]
Because of the different range of $k$, the proof of Theorem \ref{Thm5} is a little different from that of Theorem \ref{Thm4}. We first split the summation of the following $q$-trinomial coefficient into three parts as
\begin{align}\label{Eq32}
\bigg(\!\!\dbinom{an-1}{an-2n+j}\!\!\bigg)_q=&\binq{an-1}{an-2n+j}+\sum_{k=1}^{n-j-1}q^{k(k+an-2n+j)}\binq{an-1}{k}\binq{an-k-1}{k+an-2n+j}\nonumber\\
&+\sum_{k=n-j}^{\lfloor\frac{2n-j-1}{2}\rfloor}q^{k(k+an-2n+j)}\binq{an-1}{k}\binq{an-k-1}{k+an-2n+j}.
\end{align}
The first and the second parts of the right-hand side of \eqref{Eq32} can be handled just as what we have done in the proof of Theorem \ref{Thm4}.
Thus, we can get the following result with no difficulty,
\begin{align}
&\binq{an-1}{an-2n+j}+\sum_{k=1}^{n-j-1}q^{k(k+an-2n+j)}\binq{an-1}{k}\binq{an-k-1}{k+an-2n+j}\nonumber\\
&\quad \equiv(a-1)\bigg(\frac{n-j}{3}\bigg)q^{(n-j-1)(n-j-2)/6-(j^2+j)/2}\pmod{\Phi_n(q)}.\label{Eq33}
\end{align}
Now we begin to calculate the third part of the right-hand side of \eqref{Eq32}. Since the third part will disappear if $j=0$,
we only need to consider the case where $j$ is in the range $1 \leq j \leq n$. Noticing that ${an-k-1\brack k+an-2n+j}$ can be simplified through the method used in \eqref{Eq24}, and recalling the result \eqref{Eq23}, we have
\begin{align}\label{Eq34}
&\sum_{k=n-j}^{\lfloor\frac{2n-j-1}{2}\rfloor}q^{k(k+an-2n+j)}\binq{an-1}{k}\binq{an-k-1}{k+an-2n+j}\nonumber\\
&\ \equiv(-1)^jq^{-(j^2+j)/2}\sum_{k=n-j}^{\lfloor\frac{2n-j-1}{2}\rfloor}q^{-k^2-kj-k}\binq{2k+j}{k}\pmod{\Phi_n(q)}.
\end{align}
Performing $k\to n-j+k$ in the summation of the right-hand side of \eqref{Eq34}, we have
\begin{align}\label{Eq35}
\sum_{k=n-j}^{\lfloor\frac{2n-j-1}{2}\rfloor}q^{-k^2-kj-k}\binq{2k+j}{k}
&=\sum_{k=0}^{\lfloor\frac{j-1}{2}\rfloor}q^{-(k+n-j)^2-(k+n-j)j-(k+n-j)}\binq{2(k+n-j)+j}{k+n-j}\nonumber\\
&\equiv\sum_{k=0}^{\lfloor\frac{j-1}{2}\rfloor}q^{-k^2+kj-k+j}\binq{n+2k-j}{k}\nonumber\\
&\equiv(-1)^{n-j}\bigg(\frac{j}{3}\bigg)q^{(j-1)(j-2)/6} \pmod{\Phi_n(q)},
\end{align}
where we have utilized Theorem \ref{Thm7} with $j\to n-j$ in the last step.

Finally, by combining the formulas from \eqref{Eq32} to \eqref{Eq35} together, we obtain
\begin{align}\label{Eq37}
\bigg(\!\!\dbinom{an-1}{an-2n+j}\!\!\bigg)_q&\equiv(a-1)\bigg(\frac{n-j}{3}\bigg)q^{\frac{(n-j-1)(n-j-2)}{6}-\frac{j^2+j}{2}}+\bigg(\frac{j}{3}\bigg)q^{\frac{1-j^{2}}{3}}\pmod{\Phi_n(q)}.
\end{align}
The right-hand side of \eqref{Eq35} equals $0$ if $j=0$, and so \eqref{Eq37} also holds when $j=0$.
Thus we finish the proof of Theorem \ref{Thm5}.
\end{proof}

\section{The Proof of Theorem \ref{Thm6}}\label{Sec6}
\begin{proof}[Sketch of Proof]
For $1 \leq k\leq \lfloor (a-b)n/2\rfloor$, we have
\begin{align*}
 \binq{an-k-1}{k+bn-1}\nonumber
&=\binq{an}{bn}\frac{(1-q^{(a-b)n})\cdots(1-q^{(a-b)n-2k+1})}{(1-q^{bn+1})\cdots(1-q^{bn+k-1})(1-q^{an-k})\cdots(1-q^{an})}\nonumber\\[2mm]
&\equiv(-1)^{k-1}\dbinom{a-1}{b}q^{-3k(k-1)/2}\binq{2k-1}{k} \pmod{\Phi_n(q)}.
\end{align*}
Similarly to the proof of Theorem \ref{Thm4}, we get
 \begin{align}
 \bigg(\!\!\dbinom{an-1}{bn-1}\!\!\bigg)_q&=\sum_{k=0}^{\lfloor\frac{(a-b)n}{2}\rfloor}q^{k(k+bn-1)}\binq{an-1}{k}\binq{an-k-1}{k+bn-1}\nonumber\\
&\equiv\binq{an-1}{bn-1}-\sum_{k=1}^{\lfloor\frac{(a-b)n}{2}\rfloor}q^{-k^2}\binq{2k-1}{k}\nonumber\\
&\equiv\binq{an-1}{bn-1}-\sum_{k=1}^{\lfloor\frac{(a-b)n}{2}\rfloor}(-1)^kq^{k(k-1)/2}\binq{n-k}{k}\nonumber\\
&\equiv \dbinom{a}{b}+\dbinom{a-1}{b}\bigg(\frac{n+1}{3}\bigg)q^{n(n-1)/6}\pmod{\Phi_n(q)},\nonumber
 \end{align}
%where the third step has utilized a simple skill which is similar to the method used in the proof of Theorem \ref{Thm7}.
where we have applied Lemma \ref{Lem2} in the last step.
\end{proof}


\begin{thebibliography}{99}
\bibitem{AndrewsBaxter}G.E. Andrews and R.J. Baxter,
Lattice gas generalization of the hard hexagon model \MakeUppercase{\romannumeral3}: $q$-trinomial coefficients,
\emph{J. Stat. Phys.} \textbf{47} (1987), 297--330.

\bibitem{Apagodu}M. Apagodu,
Elementary proof of congruences involving sum of binomial coefficients,
\emph{Int. J. Number Theory} \textbf{14} (2018), 1547--1557.

\bibitem{Liu4}M. Apagodu and J.-C. Liu,
Congruence properties for the trinomial coefficients,
\emph{Integers} \textbf{20} (2020), Art. 38.

\bibitem{Ekhad}SB. Ekhad and D. Zeilberger,
The number of solutions of $X^2=0$ in triangular matrices over $GF(q)$,
\emph{Electron. J. Combin.} \textbf{3(1)} (1996), R2.

\bibitem{Guo4}V.J.W. Guo and M.J. Schlosser,
A family of $q$-hypergeometric congruences modulo the fourth power of a cyclotomic polynomial,
\emph{Israel J. Math.} \textbf{240} (2020), 821--835.


\bibitem{Guo6}V.J.W. Guo and M.J. Schlosser,
Some $q$-supercongruences from transformation formulas for basic hypergeometric series,
\emph{Constr. Approx.} \textbf{53} (2021), 155--200.

\bibitem{Wang}V.J.W. Guo and S.-D. Wang,
Some congruences involving fourth powers of central $q$-binomial coefficients,
\emph{Proc. Roy. Soc. Edinburgh Sect. A} \textbf{150} (2020), 1127--1138.

\bibitem{GuoZeng}V.J.W. Guo and J. Zeng,
Some congruences involving central $q$-binomial coefficients,
\emph{Adv. appl. Math.} \textbf{45} (2010), 303--316.

\bibitem{GuoZudilin}V.J.W. Guo and W. Zudilin,
 A $q$-microscope for supercongruences,
\emph{Adv. Math.} \textbf{346} (2019), 329--358.

\bibitem{krat}C. Krattenthaler,
 unpublished.

\bibitem{Liu1}J.-C. Liu,
Some finite generalizations of Euler's pentagonal number theorem,
\emph{Czechoslovak Math. J.} \textbf{142} (2017), 525--531.

\bibitem{Liu3}J.-C. Liu,
On the divisibility of $q$-trinomial coefficients,
\emph{Ramanujan J.} to appear.

\bibitem{Liu}J.-C. Liu and F. Petrov, Congruences on sums of $q$-binomial coefficients,
\emph{Adv. Appl. Math.} \textbf{116} (2020), Art. 102003.

\bibitem{LiuWang1}Y. Liu and X. Wang,
q-Analogues of two Ramanujan-type super-congrucences,
\emph{J. Math. Anal. Appl.}, \textbf{502(1)} (2021), Art. 125238.

\bibitem{LiuWang2}Y. Liu and X. Wang,
Some q-supercongruences from a quadratic transformation by Rahman,
\emph{Results Math.}, \textbf{77 (1)} (2022), Art. 44.

\bibitem{NiPan}H.-X. Ni and H. Pan,
On the lacunary sum of trinomial coefficients,
\emph{Appl. Math. Comput.} \textbf{339} (2018), 286--293.

\bibitem{Sills}A.V. Sills,
An invitation to the Rogers-Ramanujan identities,
\emph{CRC Press}, 2018.

\bibitem{Straub2}A. Straub, Supercongruences for polynomial analogs of the Ap\'ery numbers,
\emph{Proc. Amer. Math. Soc.} \textbf{147} (2019), 1023--1036.

\bibitem{SunTauraso}Z.-W. Sun and R. Tauraso,
On some new congruences for binomial coefficients,
\emph{Int. J. Number Theory} \textbf{7} (2011), 645--662.

\bibitem{warnaar}S.O. Warnaar,
$q$-Hypergeometric proofs of polynomial analogues of the triple product identity, Lebesgue's identity and Euler's pentagonal number theorem,
\emph{Ramanujan J.} \textbf{8} (2005), 467--474.

\bibitem{Zudilin}W. Zudilin,
Congruences for $q$-binomial coefficients,
\emph{Ann. Comb.} \textbf{23} (2019), 1123--1135.

\end{thebibliography}
\end{document}